\newcommand{\vect}[1]{\ensuremath{\mathbf{#1}}}
\newcommand{\card}[1]{\ensuremath{\lvert{#1}\rvert}}
\newcommand{\IN}{\ensuremath{\mathbb{N}}}
\newcommand{\nset}[1]{\ensuremath{[{#1}]}}
\newcommand{\couples}[1][n]{\ensuremath{\binom{#1}{2}}} 
\newcommand{\ontuples}[1]{\ensuremath{\underline{#1}}} 
\newcommand{\symm}[1]{\ensuremath{\Sigma_{#1}}} 
\newcommand{\pos}[2]{\overset{\text{\makebox[0mm][c]{$\overset{#1}{\downarrow}$}}}{\phantom{\makebox[0mm]{f}}{#2}}}
\DeclareMathOperator{\Inv}{Inv}                  
\DeclareMathOperator{\supp}{supp}                
\DeclareMathOperator{\ofo}{ofo}                  
\DeclareMathOperator{\id}{id}                    
\theoremstyle{plain}
\newtheorem{theorem}{Theorem}[section]
\newtheorem{proposition}[theorem]{Proposition}
\newtheorem{lemma}[theorem]{Lemma}
\theoremstyle{definition}
\newtheorem{definition}[theorem]{Definition}
\newtheorem{example}[theorem]{Example}
\newtheorem{problem}[theorem]{Problem}
\begin{document}
\title{On functions with a unique identification minor}
\author{Erkko Lehtonen}
\address{Centro de \'Algebra da Universidade de Lisboa \\
Avenida Professor Gama Pinto~2 \\
1649-003 Lisbon \\
Portugal
\and
Departamento de Matem\'atica \\
Faculdade de Ci\^encias \\
Universidade de Lisboa \\
1749-016 Lisbon \\
Portugal}
\email{erkko@campus.ul.pt}
\date{\today}
\begin{abstract}
We shed some new light to the problem of characterizing those functions of several arguments that have a unique identification minor. The $2$\hyp{}set\hyp{}transitive functions are known to have this property. We describe another class of functions that have a unique identification minor, namely functions determined by the order of first occurrence. We also present some examples of other kinds of functions with a unique identification minor. These examples have a relatively small arity.
\end{abstract}

\maketitle


\section{Introduction}

This paper is a study of the minor quasi\hyp{}order of functions of several arguments.
A function $f \colon A^n \to B$ is a minor of $g \colon A^m \to B$, if $f$ can be obtained from $g$ by the operations of identification of arguments, permutation of arguments, introduction of inessential arguments, and deletion of inessential arguments.
In the special case when a minor is obtained by the identification of a single pair of arguments, we speak of identification minors.

A function is said to have a unique identification minor if all its identification minors are equivalent to each other.
This is an interesting property of functions that is not fully understood, and this paper focuses on the following open problem.

\begin{problem}
\label{prob:uniqueidm}
Characterize the functions that have a unique identification minor.
\end{problem}

This problem was previously posed, albeit in a different formalism, by Bouaziz, Couceiro and Pouzet \cite[Problem~2(ii)]{BouCouPou} in the context of a study of the join\hyp{}irreducible members of the minor ordering of Boolean functions.
Join\hyp{}irreducibility is a property strictly weaker than that of having a unique identification minor; every function with a unique identification minor is join\hyp{}irreducible but the converse does not hold.

It is well known that the $2$\hyp{}set\hyp{}transitive functions have a unique identification minor (for a proof of this fact, see, e.g., \cite[Proposition~4.3]{Lehtonen-totsymm}; this fact is also implicit in the work of Bouaziz, Couceiro and Pouzet~\cite{BouCouPou}).
In the current paper, we identify another large class of functions that have a unique identification minor: functions determined by the order of first occurrence (see Proposition~\ref{prop:ofominor}). These are functions $f \colon A^n \to B$ that can be decomposed as $f = f^* \circ {\ofo}|_{A^n}$, where $\ofo$ is the mapping that maps each tuple $\vect{a}$ to the list of elements occurring in $\vect{a}$ in the order of first occurrence, with repetitions removed. We will also present examples of functions with a unique identification minor that are, up to equivalence, neither $2$\hyp{}set\hyp{}transitive nor determined by the order of first occurrence (Propositions~\ref{prop:notofonor2settransitive} and~\ref{prop:notofonor2settransitive-modified}). These sporadic examples have arity at most $\card{A} + 1$, and the author conjectures that such examples cannot be found when the arity is strictly greater than $\card{A} + 1$.


\section{Preliminaries}
\label{sec:preliminaries}

The set of positive integers is denoted by $\IN_+$.
For $n \in \IN_+$, the set $\{1, \dots, n\}$ is denoted by $\nset{n}$.
The set of all $2$\hyp{}element subsets of $\nset{n}$ is denoted by $\couples$.
The symmetric group on $\nset{n}$ is denoted by $\symm{n}$.
The identity map on any set is denoted by $\id$.

Let $A$ and $B$ be arbitrary nonempty sets.
A \emph{function} (\emph{of several arguments}) \emph{from $A$ to $B$} is a mapping $f \colon A^n \to B$ for some positive integer $n$, which is called the \emph{arity} of $f$.
In the special case when $A = B$, we speak of \emph{operations on $A$.}

For an $n$\hyp{}tuple $\vect{a} = (a_1, \dots, a_n) \in A^n$ and a map $\tau \colon \nset{m} \to \nset{n}$, we write $\vect{a} \tau$ to mean the $m$\hyp{}tuple $(a_{\tau(1)}, \dots, a_{\tau(m)})$.
Since the $n$\hyp{}tuple $\vect{a}$ is formally a map $\vect{a} \colon \nset{n} \to A$, the $m$\hyp{}tuple $\vect{a} \tau$ is in fact the composite map $\vect{a} \circ \tau$.
A map $\tau \colon \nset{m} \to \nset{n}$ induces a map $\ontuples{\tau} \colon A^n \to A^m$ by the rule $\ontuples{\tau}(\vect{a}) = \vect{a} \tau$ for all $\vect{a} \in A^n$.

A function $f \colon A^n \to B$ is a \emph{minor} of a function $g \colon A^m \to B$ if there exists a map $\tau \colon \nset{m} \to \nset{n}$ such that $f = g \circ \ontuples{\tau}$, i.e., $f(\vect{a}) = g(\vect{a} \sigma)$ for all $\vect{a} \in A^n$.
We shall write $f \leq g$ to mean that $f$ is a minor of $g$.
The minor relation $\leq$ is a quasiorder (a reflexive and transitive relation) on the set of all functions of several arguments from $A$ to $B$, and, as for all quasiorders, it induces an equivalence relation on this set by the following rule: $f \equiv g$ if and only if $f \leq g$ and $g \leq f$. We say that $f$ and $g$ are \emph{equivalent} if $f \equiv g$. Furthermore, $\leq$ induces a partial order on the set of equivalence classes.
Informally speaking, $f$ is a minor of $g$, if $f$ can be obtained from $g$ by permutation of arguments, introduction or deletion of inessential arguments, and identification of arguments. If $f$ and $g$ are equivalent, then each one can be obtained from the other by permutation of arguments and introduction or deletion of inessential arguments.
Note in particular that if $f, g \colon A^n \to B$, then $f \equiv g$ if and only if there exists a permutation $\sigma \in \symm{n}$ such that $f = g \circ \ontuples{\sigma}$.

We are especially interested in those minors that arise when a single pair of arguments is identified.
Let $n \geq 2$, and let $f \colon A^n \to B$. For each $I \in \couples$, we define the function $f_I \colon A^{n-1} \to B$ as $f_I = f \circ \delta_I$, where $\delta_I \colon \nset{n} \to \nset{n - 1}$ is given by
\[
\delta_I(i) =
\begin{cases}
i,      & \text{if $i < \max I$,} \\
\min I, & \text{if $i = \max I$,} \\
i - 1,  & \text{if $i > \max I$,}
\end{cases}
\qquad
\text{for all $i \in \nset{n}$.}
\]
In other words, $f_I(\vect{a}) = f(\vect{a} \delta_I)$ for all $\vect{a} \in A^{n-1}$.
More explicitly, if $I = \{i, j\}$ with $i < j$ and $\vect{a} = (a_1, \dots, a_{n-1}) \in A^{n-1}$, then $\vect{a} \delta_I = (a_1, \dots, a_{j-1}, a_i, a_j, \dots, a_{n-1})$, and we have
\[
f_I(a_1, \dots, a_{n-1}) =
f(a_1, \dots, a_{j-1}, a_i, a_j, \dots, a_{n-1}),
\]
for all $(a_1, \dots, a_{n-1}) \in A^{n-1}$.
Note that $a_i$ occurs twice on the right side of the above equality: both at the $i$\hyp{}th and at the $j$\hyp{}th position. We will refer to the functions $f_I$ ($I \in \couples$) as \emph{identification minors} of $f$.

A function $f \colon A^n \to B$ has a \emph{unique identification minor} if $f_I \equiv f_J$ for all $I, J \in \couples$.

A function $f \colon A^n \to B$ is \emph{invariant} under a permutation $\sigma \in \symm{n}$, if $f = f \circ \ontuples{\sigma}$.
The set of all permutations under which $f$ is invariant constitutes a subgroup of $\symm{n}$, and it is called the \emph{invariance group} of $f$ and denoted by $\Inv f$.
If $\Inv f = \symm{n}$ then $f$ is \emph{totally symmetric.}
A function is \emph{$2$\hyp{}set\hyp{}transitive} if its invariance group is $2$\hyp{}set\hyp{}transitive. Recall that a permutation group $G \leq \symm{n}$ is \emph{$2$\hyp{}set\hyp{}transitive} if it acts transitively on the $2$\hyp{}element subsets of $\nset{n}$, i.e., if for all $i, j, k, \ell \in \nset{n}$ with $i \neq j$ and $k \neq \ell$, there exists $\sigma \in G$ such that $\{\sigma(i), \sigma(j)\} = \{k, \ell\}$.
As mentioned in the introduction, it is well known that the $2$\hyp{}set\hyp{}transitive functions have a unique identification minor.


\section{Functions determined by the order of first occurrence}
\label{sec:ofo}

Let $A$ be a fixed nonempty set. Denote by $A^*$ the set of finite strings over $A$, i.e., the set $\bigcup_{n \geq 0} A^n$ of tuples of all possible lengths.
Furthermore, let $A^n_{\neq}$ be the set of tuples in $A^n$ without repeated elements, i.e., tuples $(a_1, \dots, a_n) \in A^n$ satisfying $a_i \neq a_j$ whenever $i \neq j$.
Clearly, if $n > \card{A}$, then $A^n_{\neq} = \emptyset$.
Denote $A^\sharp := \bigcup_{n \geq 0} A^n_{\neq}$.

Let $\ofo \colon A^* \to A^\sharp$ be the function that maps any tuple $(a_1, \dots, a_n)$ to the tuple obtained from $(a_1, \dots, a_n)$ by removing all duplicates of elements, keeping only the first occurrence of each element occurring in the tuple.
In other words, $\ofo$ maps each tuple $\vect{a}$ to the tuple that lists the different elements occurring in $\vect{a}$ in the order of first occurrence (hence the acronym $\ofo$).

\begin{example}
\begin{align*}
& \ofo(\mathsf{balloon}) = \mathsf{balon} &
& \ofo(\mathsf{kayak}) = \mathsf{kay} \\
& \ofo(\mathsf{motorcycle}) = \mathsf{motrcyle} &
& \ofo(\mathsf{seaplane}) = \mathsf{seapln} \\
& \ofo(\mathsf{sleigh}) = \mathsf{sleigh} &
& \ofo(\mathsf{submarine}) = \mathsf{submarine}
\end{align*}
\end{example}

The function $\ofo$ has remarkable properties.
As noted by Marichal, Teheux, and the current author~\cite{LehMarTeh}, it is an associative string function (i.e., $\ofo(\vect{a} \ofo(\vect{b}) \, \vect{c}) = \ofo(\vect{a} \vect{b} \vect{c})$ for all $\vect{a}, \vect{b}, \vect{c} \in A^*$) and hence also idempotent (i.e., $\ofo(\ofo(\vect{a})) = \ofo(\vect{a})$ for all $\vect{a} \in A^*$).
In order to describe another property, recall that a \emph{left regular band} is a semigroup satisfying the identities $x^2 \approx x$ and $xyx \approx xy$.
In the free left regular band on $A$, the product of elements $\vect{u}$ and $\vect{v}$ is $\ofo(\vect{u} \vect{v})$.
Since $\ofo(\ofo(\vect{u}) \ofo(\vect{v})) = \ofo(\vect{u} \vect{v})$, the function $\ofo$ is a homomorphism of the free semigroup on $A$ to the free left regular band on $A$.

\begin{lemma}
\label{lem:ofodeltaI}
For every $\vect{a} \in A^{n-1}$ and for every $I \in \couples$, it holds that $\ofo(\vect{a}) = \ofo(\vect{a} \delta_I)$.
\end{lemma}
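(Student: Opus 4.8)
The plan is to reduce the statement to the associativity property of $\ofo$ recalled above, namely $\ofo(\vect{u}\,\ofo(\vect{v})\,\vect{w}) = \ofo(\vect{u}\vect{v}\vect{w})$ for all $\vect{u}, \vect{v}, \vect{w} \in A^*$. The structural fact driving the argument is that $\vect{a}\delta_I$ is obtained from $\vect{a}$ by inserting, to the right of an already existing occurrence, one extra copy of a letter that already appears in $\vect{a}$. Precisely, writing $I = \{i,j\}$ with $i < j$ and splitting $\vect{a} = \vect{b}\vect{c}$ with $\vect{b} = (a_1, \dots, a_{j-1})$ and $\vect{c} = (a_j, \dots, a_{n-1})$ (so $\vect{c}$ is the empty string when $j = n$), the explicit description of $\delta_I$ gives $\vect{a}\delta_I = \vect{b}\,(a_i)\,\vect{c}$, and since $i \le j-1$ the letter $a_i$ occurs in $\vect{b}$.

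First I would record the auxiliary fact that $\ofo(\vect{d}\,a) = \ofo(\vect{d})$ whenever the letter $a$ occurs in $\vect{d}$: by associativity $\ofo(\vect{d}\,a) = \ofo(\ofo(\vect{d})\,a)$, and reading the string $\ofo(\vect{d})\,a$ from left to right, every letter of the repetition-free string $\ofo(\vect{d})$ is a first occurrence while the trailing $a$ is not, so $\ofo(\ofo(\vect{d})\,a) = \ofo(\vect{d})$ straight from the definition of $\ofo$. Then the lemma follows from three more applications of associativity together with this fact:
\[
\ofo(\vect{a}\delta_I) = \ofo\bigl(\vect{b}\,(a_i)\,\vect{c}\bigr) = \ofo\bigl(\ofo(\vect{b}\,a_i)\,\vect{c}\bigr) = \ofo\bigl(\ofo(\vect{b})\,\vect{c}\bigr) = \ofo(\vect{b}\vect{c}) = \ofo(\vect{a}),
\]
where the second and fourth equalities are instances of associativity (with $\vect{u}$ empty) and the third uses that $a_i$ occurs in $\vect{b}$.

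I do not expect a genuine obstacle: the only care needed is the bookkeeping of the boundary cases, namely $j = n$ (where $\vect{c}$ is empty) and the fact that $j \ge 2$, so that $\vect{b}$ is nonempty and $a_i$ really is one of its letters, both of which are immediate. A self-contained alternative avoiding the associativity identity would be to track first-occurrence positions directly: the map sending the position of the first occurrence of a value in $\vect{a}$ to that of its first occurrence in $\vect{a}\delta_I$ is $q \mapsto q$ for $q < j$ and $q \mapsto q+1$ for $q \ge j$, which is strictly increasing; since the set of values occurring is clearly unchanged, the two strings list the same letters in the same order.
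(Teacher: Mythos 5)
Your proof is correct. The structural observation at its heart --- that $\vect{a}\delta_I$ is exactly $\vect{a}$ with one extra copy of the letter $a_{\min I}$ inserted to the right of an already existing occurrence of that letter --- is the same one the paper uses. Where you differ is in how you justify that such an insertion leaves $\ofo$ unchanged: the paper simply reads this off from the definition of $\ofo$ in a two-sentence prose argument, whereas you derive it formally from the associativity identity $\ofo(\vect{u}\,\ofo(\vect{v})\,\vect{w}) = \ofo(\vect{u}\vect{v}\vect{w})$, which the paper recalls in the surrounding discussion but does not actually invoke in its proof. Your route is longer but more mechanical: each equality in your chain is an instance of a stated identity, and the only definitional input is the easy fact that appending to a repetition-free string a letter it already contains does nothing. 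The boundary cases you flag ($j = n$, so $\vect{c}$ empty, and $j \geq 2$, so $a_i$ genuinely lies in $\vect{b}$) are handled correctly. The ``self-contained alternative'' you sketch at the end, tracking first-occurrence positions directly, is essentially the paper's own proof.
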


\begin{proof}
The application of $\delta_I$ on the string $\vect{a}$ inserts a repetition of the $(\min I)$\hyp{}th letter of $\vect{a}$ at the $(\max I)$\hyp{}th position. Such an insertion of a repeated letter after its first occurrence (which in this case may be at the $(\min I)$\hyp{}th position or before) has no effect on the image of a string under the function $\ofo$.
\end{proof}

A function $f \colon A^n \to B$ is \emph{determined by the order of first occurrence,} if there exists a map $f^* \colon A^\sharp \to B$ such that $f = f^* \circ \ofo|_{A^n}$.

\begin{proposition}
\label{prop:ofominor}
Let $f^* \colon A^\sharp \to B$, and let $f \colon A^n \to B$.
If $f = f^* \circ {\ofo}|_{A^n}$, then $f_I = f^* \circ {\ofo}|_{A^{n-1}}$ for all $I \in \couples$.
\end{proposition}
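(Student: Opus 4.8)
The plan is to compute $f_I$ directly from its definition $f_I = f \circ \delta_I$, using the hypothesis $f = f^* \circ {\ofo}|_{A^n}$ and then invoking Lemma~\ref{lem:ofodeltaI} to collapse the composition with $\delta_I$. Concretely, for an arbitrary $\vect{a} \in A^{n-1}$ I would write $f_I(\vect{a}) = f(\vect{a}\delta_I) = f^*(\ofo(\vect{a}\delta_I))$, where the inner $\ofo$ is really $\ofo|_{A^n}$ applied to the tuple $\vect{a}\delta_I \in A^n$. At this point Lemma~\ref{lem:ofodeltaI} gives $\ofo(\vect{a}\delta_I) = \ofo(\vect{a})$, so $f_I(\vect{a}) = f^*(\ofo(\vect{a})) = (f^* \circ {\ofo}|_{A^{n-1}})(\vect{a})$. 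Since $\vect{a} \in A^{n-1}$ was arbitrary, this establishes $f_I = f^* \circ {\ofo}|_{A^{n-1}}$, which is exactly the claim.

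The one point that deserves a sentence of care is the domain bookkeeping for $\ofo$. The function $\ofo$ is defined on all of $A^*$, and $\ofo|_{A^n}$, $\ofo|_{A^{n-1}}$ are merely its restrictions to tuples of the relevant lengths; Lemma~\ref{lem:ofodeltaI} is stated for $\vect{a} \in A^{n-1}$ with $\vect{a}\delta_I \in A^n$, so the lengths match up automatically and no edge case arises (we do assume $n \geq 2$ so that $\couples[n]$ and hence $\delta_I$ make sense, which is implicit in the statement). There is no real obstacle here: the substance of the proposition is entirely carried by Lemma~\ref{lem:ofodeltaI}, and the remainder is an unwinding of definitions. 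Accordingly I would present the argument as a short displayed chain of equalities valid for all $\vect{a} \in A^{n-1}$ and all $I \in \couples$, namely
\[
f_I(\vect{a}) = f(\vect{a}\delta_I) = f^*(\ofo(\vect{a}\delta_I)) = f^*(\ofo(\vect{a})),
\]
with the third equality flagged as an application of Lemma~\ref{lem:ofodeltaI}, and then conclude.

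As an immediate corollary — worth noting in the surrounding text rather than inside this proof — the right-hand side $f^* \circ {\ofo}|_{A^{n-1}}$ does not depend on $I$, so all identification minors of a function determined by the order of first occurrence coincide (not merely up to equivalence, but literally as the same function), and in particular such a function has a unique identification minor; this is presumably how Proposition~\ref{prop:ofominor} feeds into the claim advertised in the introduction.
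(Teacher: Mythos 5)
Your proof is correct and is essentially identical to the paper's own argument: both compute $f_I(\vect{a}) = f(\vect{a}\delta_I) = f^*(\ofo(\vect{a}\delta_I)) = f^*(\ofo(\vect{a}))$ and conclude by Lemma~\ref{lem:ofodeltaI}. The extra remarks on domain bookkeeping and the corollary about unique identification minors match what the paper states in the surrounding text.
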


\begin{proof}
For any $I \in \couples$ and for any $\vect{a} \in A^{n-1}$, we have, by Lemma~\ref{lem:ofodeltaI}, that
\[
f_I(\vect{a}) =
f(\vect{a} \delta_I) =
f^* \circ \ofo (\vect{a} \delta_I) =
f^* \circ \ofo (\vect{a}).
\]
Therefore $f_I = f^* \circ {\ofo}|_{A^{n-1}}$ for all $I \in \couples$.
\end{proof}

According to Proposition~\ref{prop:ofominor}, the functions determined by the order of first occurrence have a unique identification minor.
In Proposition~\ref{prop:suppord}, we are going to find out how much overlap there is between the class of $2$\hyp{}set\hyp{}transitive functions and the class of functions determined by the order of first occurrence.

We need some notions and tools in order to state and prove Proposition~\ref{prop:suppord}.
Let us first introduce a notational device that will be used many times in the sequel. We write expressions such as
\[
(\dots, \pos{i}{a}, \dots, \pos{j}{b}, \dots)
\qquad \text{or} \qquad
(a_1, \dots, \pos{i}{a}, \dots, \pos{j}{b}, \dots, a_n)
\]
to denote an $n$\hyp{}tuple whose $i$\hyp{}th component is $a$ and the $j$\hyp{}th component is $b$. The remaining components are irrelevant to the argument at hand and they are clear from the context. The indices $i$ and $j$ are always distinct and they may be equal to $1$ or $n$, but it does not necessarily hold that $i < j$; however, if it is known that $i < j$, then we usually write the $i$\hyp{}th component to the left of the $j$\hyp{}th one. Also, whenever possible, we write components indexed by $i$ and $i + 1$ next to each other, and we write components indexed by $1$ or $n$ at the beginning and at the end of the tuple, respectively, as in the following:
\[
(\dots, \pos{i}{a}, \pos{\; i+1}{b}, \dots, \pos{\ell}{c}, \dots, \pos{n}{d}).
\]

Following Berman and Kisielewicz~\cite{BerKis}, we define the mapping $\supp \colon \bigcup_{n \geq 1} A^n \to \mathcal{P}(A)$ by the rule
$\supp(a_1, \dots, a_n) = \{a_1, \dots, a_n\}$.
A function $f \colon A^n \to B$ is \emph{determined by $\supp$,} if there exists a map $f' \colon \mathcal{P}(A) \to B$ such that $f = f' \circ {\supp}|_{A^n}$.

\begin{lemma}[{\cite[Lemma~4.1]{Lehtonen-totsymm}}]
\label{lem:hatsigma}
Let $\sigma \in \symm{n}$ and $I \in \couples$. Then there exists a permutation $\hat{\sigma} \in \symm{n-1}$ that satisfies $\hat{\sigma} \circ \delta_{\sigma^{-1}(I)} = \delta_I \circ \sigma$ and $\hat{\sigma}(\min \sigma^{-1}(I)) = \min I$.
\end{lemma}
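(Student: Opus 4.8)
The plan is to recognise that the two maps to be compared, namely $\delta_I \circ \sigma$ and $\delta_{\sigma^{-1}(I)}$, are surjections $\nset{n} \to \nset{n-1}$ that collapse one and the same pair of indices, so that $\hat\sigma$ is essentially forced. Throughout, write $I = \{i,j\}$ with $i < j$ and set $J := \sigma^{-1}(I)$, with $p := \min J$ and $q := \max J$.

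First I would check that $\delta_I \circ \sigma$ and $\delta_J$ have exactly the same fibres. From the piecewise definition of $\delta_J$ (a short case analysis) its only non\hyp{}singleton fibre is $\delta_J^{-1}(p) = \{p,q\} = J$, all other fibres being singletons, and $\delta_J$ is onto $\nset{n-1}$. For $\delta_I \circ \sigma$, one has for distinct $k,\ell \in \nset{n}$ that $\delta_I(\sigma(k)) = \delta_I(\sigma(\ell))$ if and only if $\{\sigma(k),\sigma(\ell)\} = I$, i.e.\ if and only if $\{k,\ell\} = \sigma^{-1}(I) = J$; hence $J$ is again the unique non\hyp{}singleton fibre, and $\delta_I \circ \sigma$ is likewise onto $\nset{n-1}$ (since $\delta_I$ and $\sigma$ are).

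Next I would define $\hat\sigma \colon \nset{n-1} \to \nset{n-1}$ by $\hat\sigma(m) := \delta_I(\sigma(k))$, where $k$ is any element of $\nset{n}$ with $\delta_J(k) = m$ (such $k$ exists because $\delta_J$ is onto). The fibre computation shows $\hat\sigma$ is well defined: two admissible choices of $k$ either coincide or together constitute the fibre $J$, and in either case they have the same image under $\delta_I \circ \sigma$. By construction $\hat\sigma \circ \delta_J = \delta_I \circ \sigma$, and $\hat\sigma$ is onto because $\hat\sigma \circ \delta_J = \delta_I \circ \sigma$ is; a surjective self\hyp{}map of a finite set being bijective, $\hat\sigma \in \symm{n-1}$. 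Finally, for the second condition note that $p = \min J$ satisfies $\delta_J(p) = p$, so we may take $k = p$ in the definition of $\hat\sigma$ at $m = \min J$; then $\hat\sigma(\min J) = \delta_I(\sigma(p))$, and since $p \in J = \sigma^{-1}(I)$ we have $\sigma(p) \in I = \{i,j\}$, whence $\delta_I(\sigma(p)) = \min I$.

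Alternatively, one can bypass the existence argument by exhibiting $\hat\sigma$ explicitly as $\hat\sigma := \delta_I \circ \sigma \circ \epsilon$, where $\epsilon \colon \nset{n-1} \to \nset{n}$ is the increasing injection with image $\nset{n} \setminus \{\max \sigma^{-1}(I)\}$ (a section of $\delta_{\sigma^{-1}(I)}$, i.e.\ $\delta_{\sigma^{-1}(I)} \circ \epsilon = \id$); then $\epsilon \circ \delta_{\sigma^{-1}(I)}$ fixes every index except $\max J \mapsto \min J$, and since $\sigma(\min J)$ and $\sigma(\max J)$ both lie in $I$ and are both sent to $\min I$ by $\delta_I$, both required identities fall out of the same elementary computations, and bijectivity of $\hat\sigma$ follows from that of its three factors restricted appropriately. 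The only point demanding care is the bookkeeping around $\min I, \max I$ and $\min J, \max J$ in the fibre computation and in the final substitution; I expect no substantial obstacle.
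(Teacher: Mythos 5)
Your argument is correct and complete: the key observation that $\delta_I\circ\sigma$ and $\delta_{\sigma^{-1}(I)}$ are surjections onto $\nset{n-1}$ with the same unique non\hyp{}singleton fibre $\sigma^{-1}(I)$ forces the existence of $\hat{\sigma}$, surjectivity of a self\hyp{}map of a finite set gives bijectivity, and the computation $\hat{\sigma}(\min\sigma^{-1}(I))=\delta_I(\sigma(\min\sigma^{-1}(I)))=\min I$ is valid since $\delta_I$ sends both elements of $I$ to $\min I$. The paper itself states this lemma with a citation and gives no proof, so there is nothing to compare against; your proof is the natural self\hyp{}contained one and can stand as is.
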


\begin{proposition}
\label{prop:suppord}
Assume that $n > \card{A} + 1$, and let $f \colon A^n \to B$. Then the following conditions are equivalent:
\begin{enumerate}[\rm (i)]
\item\label{prop:suppord:ts}
$f$ is totally symmetric and determined by the order of first occurrence.

\item\label{prop:suppord:2st}
$f$ is $2$\hyp{}set\hyp{}transitive and determined by the order of first occurrence.

\item\label{prop:suppord:piIJ}
$f$ is determined by the order of first occurrence and for all $I, J \in \couples$, there exists a bijection $\pi_{IJ} \colon \nset{n-1} \to \nset{n-1}$ such that $\pi_{IJ}(\min J) = \min I$ and
$f(\vect{a} \delta_I) = f(\vect{a} \pi_{IJ} \delta_J)$ for all $\vect{a} \in A^{n-1}$.

\item\label{prop:suppord:supp}
$f$ is determined by $\supp$.
\end{enumerate}
\end{proposition}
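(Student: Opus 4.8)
The plan is to prove the cyclic chain of implications
\ref{prop:suppord:supp} $\Rightarrow$ \ref{prop:suppord:ts} $\Rightarrow$ \ref{prop:suppord:2st} $\Rightarrow$ \ref{prop:suppord:piIJ} $\Rightarrow$ \ref{prop:suppord:supp}, of which only the last one will actually use the hypothesis $n>\card{A}+1$. The first three are easy. If $f=f'\circ{\supp}|_{A^n}$, then $f$ is totally symmetric, since $\supp(\vect{a}\sigma)=\supp(\vect{a})$ for every $\sigma\in\symm{n}$, and it is determined by the order of first occurrence, since $\supp=\supp\circ\ofo$ and hence $f=f^*\circ{\ofo}|_{A^n}$ with $f^*(\vect{c}):=f'(\supp(\vect{c}))$; this gives \ref{prop:suppord:supp} $\Rightarrow$ \ref{prop:suppord:ts}. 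The implication \ref{prop:suppord:ts} $\Rightarrow$ \ref{prop:suppord:2st} holds because $\symm{n}$ is $2$\hyp{}set\hyp{}transitive. For \ref{prop:suppord:2st} $\Rightarrow$ \ref{prop:suppord:piIJ}, fix $I,J\in\couples$; by $2$\hyp{}set\hyp{}transitivity of $\Inv f$ there is $\sigma\in\Inv f$ with $\sigma^{-1}(I)=J$ as sets, and Lemma~\ref{lem:hatsigma} supplies $\hat{\sigma}\in\symm{n-1}$ with $\hat{\sigma}\circ\delta_J=\delta_I\circ\sigma$ and $\hat{\sigma}(\min J)=\min I$. Taking $\pi_{IJ}:=\hat{\sigma}$, we get for every $\vect{a}\in A^{n-1}$ that $f(\vect{a}\pi_{IJ}\delta_J)=f(\vect{a}\delta_I\sigma)=f(\vect{a}\delta_I)$, using $\hat{\sigma}\circ\delta_J=\delta_I\circ\sigma$ and then $\sigma\in\Inv f$.

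The content lies in \ref{prop:suppord:piIJ} $\Rightarrow$ \ref{prop:suppord:supp}. Because $f$ is determined by the order of first occurrence, Proposition~\ref{prop:ofominor} provides a single function $g:=f^*\circ{\ofo}|_{A^{n-1}}$ equal to $f_I$ for every $I\in\couples$; hence the equation in \ref{prop:suppord:piIJ} becomes $g(\vect{a})=g(\vect{a}\pi_{IJ})$, i.e.\ $\pi_{IJ}\in\Inv g$, and since $\min I$ and $\min J$ run independently through all of $\nset{n-1}$ as $I,J$ vary, condition~\ref{prop:suppord:piIJ} simply says that $f$ is determined by the order of first occurrence and $\Inv g$ is transitive on $\nset{n-1}$. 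I will derive \ref{prop:suppord:supp} from the \emph{local claim}: if $h\colon A^m\to B$ is determined by the order of first occurrence, $m>\card{A}$, and $\Inv h$ is transitive on $\nset{m}$, then $h$ is determined by $\supp$. Granting this and applying it to $h=g$ (legitimate since $m=n-1>\card{A}$), we obtain $g=g'\circ{\supp}|_{A^{n-1}}$; as $n-1>\card{A}$, every nonempty repetition\hyp{}free tuple $\vect{c}$ over $A$ equals $\ofo(\vect{b})$ for some $\vect{b}\in A^{n-1}$, so $f^*(\vect{c})=g(\vect{b})=g'(\supp(\vect{c}))$, whence $f(\vect{a})=f^*(\ofo(\vect{a}))=g'(\supp(\vect{a}))$ and $f$ is determined by $\supp$.

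To prove the local claim I would induct on $\card{A}$, the case $\card{A}=1$ being trivial. For the inductive step, restricting $h$ to $(A')^m$ for each $(\card{A}-1)$\hyp{}element subset $A'\subseteq A$ gives a function that is again determined by the order of first occurrence, with $m>\card{A'}$ and with transitive (indeed larger) invariance group; by the induction hypothesis $h^*$ is already constant on the $\supp$\hyp{}class of every repetition\hyp{}free tuple of length $<\card{A}$, so only the $\card{A}!$ orderings of $A$ remain. Since the cyclic shift (by one position) and the transposition of the first two entries generate $\symm{\card{A}}$, it is enough to exhibit, for every ordering $\vect{c}$ of $A$, tuples $\vect{b},\vect{b}'\in A^m$ with $\ofo(\vect{b})=\ofo(\vect{b}')=\vect{c}$ together with permutations $\pi,\pi'\in\Inv h$ (furnished by transitivity) such that $\ofo(\vect{b}\pi)$ is the cyclic shift of $\vect{c}$ and $\ofo(\vect{b}'\pi')$ is the transposition of the first two entries of $\vect{c}$; then $h^*$ is $\symm{\card{A}}$\hyp{}invariant on the orderings of $A$, hence constant there, since $h^*(\vect{c})=h(\vect{b})=h(\vect{b}\pi)=h^*(\ofo(\vect{b}\pi))$ and likewise for $\vect{b}',\pi'$. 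The main obstacle is precisely this construction: one knows only that $\pi$ sends one prescribed position to another, and must use the at least one ``free'' position available because $m>\card{A}$ to insert repeated letters into $\vect{b}$ so that applying $\pi$ realizes the required reordering of the order of first occurrence. This is also where the strict inequality is indispensable: when $n=\card{A}+1$ the identification minor $g$ has arity $\card{A}$, a full ordering of $A$ is its own only $\ofo$\hyp{}preimage and cannot be padded, and then a merely transitive $\Inv g$ need not force $f^*$ to be symmetric on the orderings of $A$, so the equivalence fails.
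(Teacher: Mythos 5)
Your implications \eqref{prop:suppord:supp}~$\Rightarrow$~\eqref{prop:suppord:ts}~$\Rightarrow$~\eqref{prop:suppord:2st}~$\Rightarrow$~\eqref{prop:suppord:piIJ} are correct and essentially coincide with the paper's. Your reformulation of condition~\eqref{prop:suppord:piIJ} is also correct and illuminating: since $f_I=f_J=g:=f^*\circ{\ofo}|_{A^{n-1}}$ for all $I,J$ by Proposition~\ref{prop:ofominor}, and since $\min I$ and $\min J$ range over all of $\nset{n-1}$, condition~\eqref{prop:suppord:piIJ} is exactly ``$f$ is determined by the order of first occurrence and $\Inv g$ is transitive''. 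The reduction of \eqref{prop:suppord:piIJ}~$\Rightarrow$~\eqref{prop:suppord:supp} to your local claim, and the lift from $g$ back to $f$, are also fine.

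The problem is that the local claim is the entire content of the proposition, and you have not proved it: you yourself flag the inductive step as ``the main obstacle''. Worse, the plan you sketch for that step looks unworkable as stated. Transitivity of $\Inv h$ only lets you prescribe $\pi$ at a \emph{single} point, so for $\vect{b}$ with $\ofo(\vect{b})=\vect{c}$ the tuple $\ofo(\vect{b}\pi)$ is some ordering of $A$ about which you know only one entry's preimage position; there is no way to force it to be the cyclic shift or the adjacent transposition of $\vect{c}$, no matter how you pad $\vect{b}$ with the $m-\card{A}\geq 1$ free positions. (Note also that your local claim with $m=\card{A}+1$ is in fact \emph{equivalent} to the hard implication of the proposition---apply it to $g$, or conversely build $f:=h^*\circ{\ofo}|_{A^{m+1}}$ from $h$---so nothing has been gained until that step is actually carried out.) The paper takes a different and more concrete route: from condition~\eqref{prop:suppord:piIJ} with $J=\{n-1,n\}$ it first extracts a ``swap identity'' (its equation~\eqref{eq:swapbc}): if an element $b$ sits at two positions of an $n$-tuple and both $b$ and $c$ occur among the remaining entries, then replacing both copies of $b$ by $c$ does not change the value of $f$. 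It then shows directly that every adjacent transposition lies in $\Inv f$, by a chain of swap- and $\ofo$-moves that uses, as workspace, \emph{two} elements each having an earlier repeated occurrence in the given $n$-tuple. The pigeonhole guarantee of two such repetitions is precisely where $n>\card{A}+1$ enters; with only one repetition available (your situation for $h$ of arity $\card{A}+1$) that argument does not run, which is why your induction on $\card{A}$ would need a genuinely new idea at the top level. So there is a real gap at the crux of the proof; I would suggest replacing the induction by the swap-identity argument applied directly to $f$.
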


\begin{proof}
We will prove the implications $\eqref{prop:suppord:ts} \implies \eqref{prop:suppord:2st} \implies \eqref{prop:suppord:piIJ} \implies \eqref{prop:suppord:ts}$ and $\eqref{prop:suppord:ts} \implies \eqref{prop:suppord:supp} \implies \eqref{prop:suppord:ts}$.

$\eqref{prop:suppord:ts} \implies \eqref{prop:suppord:2st}$:
Total symmetry implies $2$\hyp{}set\hyp{}transitivity.

$\eqref{prop:suppord:2st} \implies \eqref{prop:suppord:piIJ}$:
Assume that $f$ is $2$\hyp{}set\hyp{}transitive, and let $I, J \in \couples$. Then there exists a permutation $\sigma \in \Inv f$ such that $\sigma^{-1}(I) = J$. By Lemma~\ref{lem:hatsigma}, there exists a permutation $\hat{\sigma} \in \symm{n-1}$ such that $\hat{\sigma} \circ \delta_J = \delta_I \circ \sigma$ and $\hat{\sigma}(\min J) = \min I$. Therefore, setting $\pi_{IJ} := \hat{\sigma}$, we have $f(\vect{a} \delta_I) = f(\vect{a} \delta_I \sigma) = f(\vect{a} \pi_{IJ} \delta_J)$ for all $\vect{a} \in A^{n-1}$.

$\eqref{prop:suppord:piIJ} \implies \eqref{prop:suppord:ts}$:
Assume that condition \eqref{prop:suppord:piIJ} holds. Observe first that for all integers $k$ and $\ell$ such that $1 \leq k < \ell \leq n$ and for all $a_1, \dots, a_n \in A$ and for $b, c \in \{a_1, \dots, a_{k-1}, a_{k+1}, \dots, a_{\ell-1}, a_{\ell}, \dots, a_{n-1}\}$, by choosing $I := \{k, \ell\}$ and $J := \{n-1, n\}$, we have that
\begin{equation}
\label{eq:swapbc}
\begin{split}
& f(a_1, \dots, a_{k-1}, b, a_{k+1}, \dots, a_{\ell-1}, b, a_{\ell}, \dots, a_{n-1}) = \\
& f(a_{\pi_{IJ}(1)}, \dots, a_{\pi_{IJ}(n-2)}, b, b) = \\
& f(a_{\pi_{IJ}(1)}, \dots, a_{\pi_{IJ}(n-2)}, c, c) = \\
& f(a_1, \dots, a_{k-1}, c, a_{k+1}, \dots, a_{\ell-1}, c, a_{\ell}, \dots, a_{n-1}),
\end{split}
\end{equation}
where the first and third equalities hold by condition \eqref{prop:suppord:piIJ}, and the second equality holds because $f$ is determined by the order of first occurrence and both $b$ and $c$ occur among $a_{\pi_{IJ}(1)}, \dots, a_{\pi_{IJ}(n-2)}$.

We will show that $f$ is totally symmetric. To this end, it is sufficient to show that $\Inv f$ contains all adjacent transpositions $(m \;\: m + 1)$, $1 \leq m \leq n - 1$, i.e., for every $m \in \nset{n - 1}$,
\begin{equation}
\label{eq:mm+1}
f(a_1, \dots, a_n) =
f(a_1, \dots, a_{m-1}, a_{m+1}, a_m, a_{m+2}, \dots, a_n),
\end{equation}
for all $a_1, \dots, a_n \in A$.

Let $m \in \nset{n-1}$, and let $(a_1, \dots, a_n) \in A^n$. If $a_m = a_{m+1}$, then equality~\eqref{eq:mm+1} obviously holds, so let us assume that $a_m \neq a_{m+1}$; let $x := a_m$, $y := a_{m+1}$. Since $n > \card{A} + 1$, there exist indices $i < j$ and $i' < i$, $j' < j$ such that $\alpha := a_{i'} = a_i$ and $\beta := a_{j'} = a_j$. We need to consider several cases according to the order of elements $i$, $j$ and $m$.
In what follows, we will write
\begin{itemize}
\item ``$\stackrel{\ofo}{=}$'' to indicate that the equality holds because $f$ is determined by the order of first occurrence,
\item ``$\stackrel{pq}{=}$'', where $p, q \in \nset{n}$, to indicate that the equality holds by \eqref{eq:swapbc} for $I = \{p, q\}$.
\end{itemize}

\begin{asparaenum}[\it {Case} 1:]
\item $\{i, j\} \cap \{m, m+1\} = \emptyset$. We only give the details in the case when $i < m$, $m + 1 < j$. The other two cases ($i < j < m$; $m + 1 < i < j$) are proved in a similar way.
\begin{align*}
&
f(\dots, \pos{i}{\alpha}, \dots, \pos{m}{x}, \pos{\;\; m+1}{y}, \dots, \pos{j}{\beta}, \dots)
\stackrel{\ofo}{=}
f(\dots, \pos{i}{\alpha}, \dots, \pos{m}{x}, \pos{\;\; m+1}{y}, \dots, \pos{j}{\alpha}, \dots)
\stackrel{ij}{=} \\ &
f(\dots, \pos{i}{x}, \dots, \pos{m}{x}, \pos{\;\; m+1}{y}, \dots, \pos{j}{x}, \dots)
\stackrel{mj}{=}
f(\dots, \pos{i}{x}, \dots, \pos{m}{y}, \pos{\;\; m+1}{y}, \dots, \pos{j}{y}, \dots)
\stackrel{m+1, j}{=} \\ &
f(\dots, \pos{i}{x}, \dots, \pos{m}{y}, \pos{\;\; m+1}{x}, \dots, \pos{j}{x}, \dots)
\stackrel{ij}{=}
f(\dots, \pos{i}{\alpha}, \dots, \pos{m}{y}, \pos{\;\; m+1}{x}, \dots, \pos{j}{\alpha}, \dots)
\stackrel{\ofo}{=} \\ &
f(\dots, \pos{i}{\alpha}, \dots, \pos{m}{y}, \pos{\;\; m+1}{x}, \dots, \pos{j}{\beta}, \dots)
.
\end{align*}

\item $\{i, j\} \cap \{m, m+1\} \neq \emptyset$. Then $x$ or $y$ occurs before the $m$\hyp{}th position and we clearly have
\[
f(a_1, \dots, \pos{m}{x}, \pos{\;\; m+1}{y}, \dots, a_n)
\stackrel{\ofo}{=}
f(a_1, \dots, \pos{m}{y}, \pos{\;\; m+1}{x}, \dots, a_n)
.
\]
\end{asparaenum}

We conclude that \eqref{eq:mm+1} holds for all $a_1, \dots, a_n \in A$, for any $m \in \nset{n}$, i.e., $\Inv f$ contains all adjacent transpositions $(m \;\: m + 1)$. This implies that $f$ is totally symmetric, as claimed.

$\eqref{prop:suppord:ts} \implies \eqref{prop:suppord:supp}$:
Assume that $f$ is determined by the order of first occurrence. Then there exists $f' \colon A^\sharp \to B$ such that $f = f' \circ {\ofo}|_{A^n}$. Since $f$ is totally symmetric, $f'(\vect{a}) = f(\vect{a} \sigma)$ for any permutation $\sigma$ of $\nset{r}$, for all $\vect{a} \in A^\sharp \cap A^r$, $r \geq 1$. Hence the function $f^* \colon \mathcal{P}(A) \to B$ given by setting $f^*(S) := f'(\vect{a})$, where $\vect{a}$ is any element of $A^\sharp$ such that $\supp(\vect{a}) = S$, is well defined. We have
\begin{multline*}
(f^* \circ \supp) (\vect{a}) =
f^*(\supp(\vect{a})) =
f^*(\supp(\ofo(\vect{a})) = \\
f'(\ofo(\vect{a})) =
(f' \circ \ofo) (\vect{a}) = 
f(\vect{a}),
\end{multline*}
for all $\vect{a} \in A^n$. Thus, $f = f^* \circ {\supp}|_{A^n}$, i.e., $f$ is determined by $\supp$.

$\eqref{prop:suppord:supp} \implies \eqref{prop:suppord:ts}$:
Assume that $f$ is determined by $\supp$. Then $f$ is totally symmetric. Furthermore, $f = f^* \circ {\supp}|_{A^n}$ for some $f^* \colon \mathcal{P}(A) \to B$. Define $f' \colon A^\sharp \to B$ as $f'(a_1, \dots, a_r) := f^*(\{a_1, \dots, a_r\})$, for all $(a_1, \dots, a_r) \in A^\sharp$. Then
\begin{multline*}
(f' \circ \ofo) (\vect{a}) = 
f'(\ofo(\vect{a})) =
f^*(\supp(\ofo(\vect{a})) = \\
f^*(\supp(\vect{a})) =
(f^* \circ \supp) (\vect{a}) =
f(\vect{a}),
\end{multline*}
for all $\vect{a} \in A^n$. Thus, $f = f' \circ {\ofo}|_{A^n}$, i.e., $f$ is determined by the order of first occurrence.
\end{proof}


\section{Other functions with a unique identification minor}
\label{sec:other}

We are now going to show (see Proposition~\ref{prop:notofonor2settransitive}) that if $n = \card{A} + 1$, then there exist functions $f \colon A^n \to B$ such that $f$ has a unique identification minor and $f$ is neither $2$\hyp{}set\hyp{}transitive nor equivalent to any function determined by the order of first occurrence. For this end, we make use of a functional construction presented in~\cite{Lehtonen-totsymm} that provides a function of arity $\card{A} + 1$ with pre\hyp{}specified identification minors.
By forcing all identification minors to be the same, up to equivalence, and by choosing other parameters in a careful way, we end up with a function with the desired properties.

\begin{definition}
\label{def:fGPphi}
Assume that $n = k + 1$ and $A$ is a set such that $\card{A} = k \geq 2$. Let $g' \colon \mathcal{P}(A) \to B$ and let $g \colon A^k \to B$, $g = g' \circ {\supp}|_{A^k}$. Let $G := (g^I)_{I \in \couples}$ be a family of functions $g^I \colon A^k \to B$ satisfying $g^I(\vect{a}) = g(\vect{a})$ whenever $\supp(\vect{a}) \neq A$, and let $P := (\rho_I)_{I \in \couples}$ be a family of permutations in $\symm{k}$. Let $\phi \colon \couples \to \couples$ be a bijection. Define $f_{G,P,\phi} \colon A^n \to B$ by the rule $f_{G,P,\phi}(\vect{b}) = g^{\phi(I)}(\vect{a} \rho_I)$ if $\vect{b} = \vect{a} \delta_I$ for $I \in \couples$.
\end{definition}

The definition of $f_{G,P,\phi}$ is good, because if $\supp(\vect{b}) = A$, then there is a unique $\vect{a} \in A^k$ and a unique $I \in \couples$ such that $\vect{b} = \vect{a} \delta_I$; and if $\supp(\vect{b}) \neq A$, then for every $\vect{a} \in A^k$ and for every $I \in \couples$ satisfying $\vect{b} = \vect{a} \delta_I$, we have $\supp(\vect{a}) = \supp(\vect{b}) \neq A$ and $g^{\phi(I)}(\vect{a} \rho_I) = g'(\supp(\vect{a} \rho_I)) = g'(\supp(\vect{b}))$.
It was shown in~\cite[Lemma~3.13]{Lehtonen-totsymm} that $(f_{G,P,\phi})_I \equiv g^{\phi(I)}$ for every $I \in \couples$.

\begin{proposition}
\label{prop:notofonor2settransitive}
Assume that $n = k + 1$ and $A$ and $B$ are sets such that $\card{A} = k \geq 2$ and $\card{B} \geq 2$.
Then there exist functions $f \colon A^n \to B$ and $f^* \colon A^\sharp \to B$ such that $f_I \equiv f^* \circ {\ofo}|_{A^{n-1}}$ for all $I \in \couples$ but $f$ is not equivalent to any $n$\hyp{}ary function determined by the order of first occurrence. Furthermore, if $k > 2$, then $\Inv f = \{\id\}$, and hence $f$ is not $2$\hyp{}set\hyp{}transitive.
\end{proposition}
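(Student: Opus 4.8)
The plan is to feed the construction of Definition~\ref{def:fGPphi} with carefully chosen parameters and then verify the three claims more or less directly. Fix an enumeration $A = \{\alpha_1, \dots, \alpha_k\}$ and two distinct elements $0, 1 \in B$, and put $\vect{e} := (\alpha_1, \dots, \alpha_k) \in A^k_{\neq}$. Let $g' \colon \mathcal{P}(A) \to B$ be constant with value $0$ and $g := g' \circ {\supp}|_{A^k}$; let $h \colon A^k \to B$ and $f^* \colon A^\sharp \to B$ both be the indicator functions of $\vect{e}$, so that $h = f^* \circ {\ofo}|_{A^k}$ and, since $\vect{e}$ has full support, $h(\vect{x}) = 0 = g(\vect{x})$ whenever $\supp(\vect{x}) \neq A$. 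Put $g^I := h$ for all $I \in \couples$, put $\rho_I := \id$ for $I \neq \{k, k+1\}$ and $\rho_{\{k, k+1\}} := (k - 1 \;\: k) \in \symm{k}$, and put $\phi := \id$. With $G := (g^I)_I$ and $P := (\rho_I)_I$, set $f := f_{G, P, \phi} \colon A^n \to B$.

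The first claim is then immediate: the $g^I$ agree with $g$ off $A^k_{\neq}$, so $f$ is well defined, and $f_I \equiv g^{\phi(I)} = h = f^* \circ {\ofo}|_{A^{n - 1}}$ for every $I$ (recall $n - 1 = k$) by the property of $f_{G, P, \phi}$ recalled after Definition~\ref{def:fGPphi}. Spelling out $f_{G, P, \phi}$ on full\hyp{}support tuples, $f$ is the indicator function of the set
\[
S := \bigl\{ (\vect{e} \rho_I^{-1}) \delta_I : I \in \couples \bigr\},
\]
which has $\couples$ members, all of full support; explicitly, $S$ is obtained from the $\ofo$\hyp{}fibre $\{ \vect{e} \delta_I : I \in \couples \}$ of $\vect{e}$ by replacing its member $(\alpha_1, \dots, \alpha_k, \alpha_k)$, coming from $I = \{k, k+1\}$, by the tuple $(\alpha_1, \dots, \alpha_{k-2}, \alpha_k, \alpha_{k-1}, \alpha_{k-1})$. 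The remaining two claims I would establish by examining the action of $\symm{n}$ on $S$ by permutation of arguments.

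For the second claim, observe that $f$ is equivalent to some $n$\hyp{}ary function determined by the order of first occurrence if and only if some copy of $S$ obtained by permuting arguments is a union of $\ofo$\hyp{}fibres; since $S$ has $\couples$ members, all of full support, and any $\ofo$\hyp{}fibre consisting of full\hyp{}support $n$\hyp{}tuples also has exactly $\couples$ members, this happens precisely when $S$ itself is such a permuted copy of a single $\ofo$\hyp{}fibre. It therefore suffices to rule this out. For $k = 2$ this is clear, as $S$ is then $\symm{n}$\hyp{}invariant --- it consists of the three tuples in $\{\alpha_1, \alpha_2\}^3$ having exactly one occurrence of $\alpha_2$ --- whereas no permuted copy of an $\ofo$\hyp{}fibre is $\symm{n}$\hyp{}invariant: such a fibre has exactly one argument position on which all its members agree, while a $\symm{n}$\hyp{}invariant set with a constant position has at least two. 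For $k \geq 3$ I would use two invariants. First, the unique argument position on which all members of $S$ agree is position~$1$, carrying $\alpha_1$, and the unique such position for an $\ofo$\hyp{}fibre $\{\vect{a} \delta_I : I \in \couples\}$ is position~$1$, carrying $a_1$; so any argument permutation carrying an $\ofo$\hyp{}fibre onto $S$ fixes position~$1$ and has $a_1 = \alpha_1$. Second, after deleting position~$1$, every element of $A \setminus \{\alpha_1\}$ occurs as the repeated element of some member of that fibre, while in $S$ the element $\alpha_k$ never does --- a contradiction. Hence $f$ is not equivalent to any $n$\hyp{}ary function determined by the order of first occurrence.

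Finally, assume $k > 2$ and let $\sigma \in \Inv f$, so permutation of arguments by $\sigma$ maps $S$ onto itself. By the constant\hyp{}position invariant, $\sigma$ fixes position~$1$, hence induces a permutation $\sigma'$ of the positions $2, \dots, n$ mapping onto itself the set $S'$ of restrictions of members of $S$ to those positions, and therefore also mapping onto itself the subset $R \subseteq S'$ of repetition\hyp{}free members. One checks that $R$ consists of exactly the $k$ repetition\hyp{}free $(n-1)$\hyp{}tuples over $A$ in which $\alpha_2, \alpha_3, \dots, \alpha_k$ appear from left to right in this order, namely the restriction of $\vect{e}\delta_{\{1, q\}}$ (which carries $\alpha_1$ in position $q$) for $q = 2, \dots, n$. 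Mapping $R$ onto itself forces $(\sigma')^{-1}$ to be order\hyp{}preserving on every $(k-1)$\hyp{}element subset of $\{2, \dots, n\}$, hence, since $k - 1 \geq 2$, order\hyp{}preserving on all of $\{2, \dots, n\}$, hence the identity; thus $\sigma' = \id$ and $\sigma = \id$. So $\Inv f = \{\id\}$, and $f$ is in particular not $2$\hyp{}set\hyp{}transitive. I expect the determination of $\Inv f$ to be the main obstacle: it rests on isolating the right coordinate\hyp{}permutation invariants of $S$, and the small cases --- especially $k = 2$ in the second claim, where position~$1$ ceases to be constant --- need separate care.
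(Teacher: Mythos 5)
Your proof is correct, but it takes a genuinely different route from the paper's. You instantiate Definition~\ref{def:fGPphi} with $\rho_I = \id$ for all but one $I$, so that the level set $S = f^{-1}(1)$ is a single full\hyp{}support $\ofo$\hyp{}fibre with exactly one member swapped out; the paper instead takes $\rho_I$ to be the cyclic rotation by $\min I$, so that every point of its level set has the \emph{same} element repeated (at the positions indexed by $I$). That choice makes the paper's non\hyp{}equivalence argument a one\hyp{}liner (replace the repeated element $1$ by $2$ without changing the $\ofo$\hyp{}image and compare values), but it pays for this with a six\hyp{}case brute\hyp{}force analysis to show $\Inv f = \{\id\}$. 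Your verification runs the other way: the counting step (a union of full\hyp{}support fibres of total size $\binom{n}{2}$ must be a single fibre) plus the two coordinate\hyp{}permutation invariants (the unique constant position, and the set of elements repeated among positions $2,\dots,n$) do slightly more work for the non\hyp{}equivalence claim, but they then dispatch the invariance group conceptually via the order\hyp{}preservation of $(\sigma')^{-1}$ on all $(k-1)$\hyp{}subsets of a $k$\hyp{}set, with no case analysis. All the individual steps check out, including the separate treatment of $k=2$ (where position~$1$ is no longer constant on $S$ and you correctly fall back on $\symm{3}$\hyp{}invariance) and the use of $k-1\geq 2$ in the final step, which is exactly where the hypothesis $k>2$ enters. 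Both approaches rely on the same external ingredient, namely $(f_{G,P,\phi})_I \equiv g^{\phi(I)}$ from \cite[Lemma~3.13]{Lehtonen-totsymm}.
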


\begin{proof}
Let $\alpha$ and $\beta$ be distinct elements of $B$. Define the function $h \colon A^k \to B$ by the rule
\[
h(\vect{a}) =
\begin{cases}
\alpha, & \text{if $\vect{a} = \vect{k}$,} \\
\beta, & \text{otherwise.}
\end{cases}
\]
For $I \in \couples$, let $g^I = h$ and let $\rho_I = (1 \; 2 \; 3 \; \cdots \; k)^i = (i, i+1, \dots, k, 1, \dots, i-1)$, where $i = \min I$. Let $\phi$ be the identity map on $\couples$. Denote $G := (g^I)_{I \in \couples}$, $P := (\rho_I)_{I \in \couples}$. Let $f \colon A^n \to B$ be the function $f_{G,P,\phi}$ as in Definition~\ref{def:fGPphi}.

For $I \in \couples$, let us write $\vect{d}_I := \vect{k} \rho_I \delta_I$.
Then $f(\vect{b}) = \alpha$ if and only if $\vect{b} = \vect{d}_I$ for some $I \in \couples$.
Note that the only element of $A$ with repeated occurrences in $\vect{d}_I$ is $1$, and its occurrences are at the two positions indexed by $I$.

For example, if $k = 4$ and $n = 5$, then 
\begin{align*}
& \vect{d}_{\{1,2\}} = (1, 1, 2, 3, 4), &
& \vect{d}_{\{1,3\}} = (1, 2, 1, 3, 4), &
& \vect{d}_{\{1,4\}} = (1, 2, 3, 1, 4), \\
& \vect{d}_{\{1,5\}} = (1, 2, 3, 4, 1), &
& \vect{d}_{\{2,3\}} = (4, 1, 1, 2, 3), &
& \vect{d}_{\{2,4\}} = (4, 1, 2, 1, 3), \\
& \vect{d}_{\{2,5\}} = (4, 1, 2, 3, 1), &
& \vect{d}_{\{3,4\}} = (3, 4, 1, 1, 2), &
& \vect{d}_{\{3,5\}} = (3, 4, 1, 2, 1), \\
& \vect{d}_{\{3,6\}} = (2, 3, 4, 1, 1). &&&&
\end{align*}
In this case, the function $f$ takes on value $\alpha$ at the points listed above and value $\beta$ elsewhere.

We claim that $f$ is not equivalent to any $n$\hyp{}ary function determined by the order of first occurrence. To see this, suppose on the contrary that $f = f^* \circ \ofo|_{A^n} \circ \ontuples{\sigma}$ for some $f^* \colon A^\sharp \to B$ and $\sigma \in \symm{n}$. Let $I \in \couples$, and let $\vect{c}$ be any tuple that has two occurrences of $2$ and satisfies $\ofo(\vect{c} \sigma) = \ofo(\vect{d}_I \sigma)$. (It is clear that such a tuple exists. Take, for example, $\vect{c} := (u_1, \dots, u_{n-1}, 2) \sigma^{-1}$, where $(u_1, \dots, u_{n-1}) = \ofo(\vect{d}_I \sigma)$.) Then we have
\[
\beta = f(\vect{c}) = f^*(\ofo(\vect{c} \sigma)) = f^*(\ofo(\vect{d}_I \sigma)) = f(\vect{d}_I) = \alpha,
\]
a contradiction.

We also claim that if $k > 2$, then the only permutation under which $f$ is invariant is the identity permutation (and hence, in particular, $f$ is not $2$\hyp{}set\hyp{}transitive). To see this, let $\sigma \in \symm{n}$ and assume that $f = f \circ \ontuples{\sigma}$. Then $\vect{b} \mapsto \vect{b} \sigma$ must map the set $\{\vect{d}_I : I \in \couples\}$ onto itself. Let $J = \{1, 2\}$, and let $K$ be the unique couple in $\couples$ such that $\vect{d}_J \sigma = \vect{d}_K$. Assume that $K = \{p, q\}$ with $p < q$.

Suppose first that $p \geq 2$ and $p + 1 < q < n$. Then
\[
(1, 1, 2, 3, \dots, k-1, k) \sigma = (\dots, \pos{p-1}{k}, \pos{p}{1}, \pos{p+1}{2}, \dots, q-p, \pos{q}{1}, q-p+1, \dots),
\]
and, depending on whether $\sigma(p) = 1$ and $\sigma(q) = 2$, or $\sigma(p) = 2$ and $\sigma(q) = 1$, it holds that $\vect{d}_{\{2,n\}} \sigma = (k, 1, 2, 3, \dots, k-1, 1) \sigma$ equals 
\[
(\dots, \pos{p-1}{1}, \pos{p}{k}, \pos{p+1}{2}, \dots, q-p, \pos{q}{1}, q-p+1, \dots) 
\,\,\,\,\text{or}\,\,\,\,
(\dots, \pos{p-1}{1}, \pos{p}{1}, \pos{p+1}{2}, \dots, q-p, \pos{q}{k}, q-p+1, \dots),
\]
and in both cases we arrive at a contradiction, because neither one of these tuples is of the form $\vect{d}_I$ for some $I \in \couples$.

Suppose then that $p = 2$ and $q = n$. Then
\[
(1, 1, 2, 3, \dots, k-1, k) \sigma = (k, 1, 2, \dots, k-1, 1).
\]
Thus $\sigma$ fixes all elements in $\{3, \dots, n-1\}$ and $\sigma(1) = n$, and we have that either $\sigma = (1 \; n)$ or $\sigma = (1 \; n \; 2)$.
If $n > 4$, then $\vect{d}_{\{3,4\}} \sigma$ is either
\[
(k-1, k, 1, 1, 2, \dots, k-3, k-2) (1 \; n) = (k-2, k, 1, 1, 2, \dots, k-3, k-1)
\]
or
\[
(k-1, k, 1, 1, 2, \dots, k-3, k-2) (1 \; n \; 2) = (k-2, k-1, 1, 1, 2, \dots, k-3, k),
\]
and both possibilities for $\sigma$ give rise to a contradiction, because neither one of these tuples is of the form $\vect{d}_I$ for some $I \in \couples$.
If $n = 4$, then
\begin{gather*}
\vect{d}_{\{2,3\}} (1 \; 4) = (3, 1, 1, 2) (1 \; 4) = (2, 1, 1, 3),
\\
\vect{d}_{\{1,4\}} (1 \; 4 \; 2) = (1, 2, 3, 1) (1 \; 4 \; 2) = (1, 1, 3, 2),
\end{gather*}
and we arrive again at a contradiction.

Suppose then that $p \geq 2$ and $q = p + 1$.
Then
\[
(1, 1, 2, 3, \dots, k-1, k) \sigma = (\dots, \pos{p-1}{k}, \pos{p}{1}, \pos{p+1}{1}, 2, \dots),
\]
and, depending on whether $\sigma(p) = 1$ and $\sigma(p+1) = 2$, or $\sigma(p) = 2$ and $\sigma(p+1) = 2$, it holds that
$\vect{d}_{\{2,n\}} \sigma = (k, 1, 2, 3, \dots, k-1, 1) \sigma$ equals
\[
(\dots, \pos{p-1}{1}, \pos{p}{k}, \pos{p+1}{1}, 2, \dots)
\qquad \text{or} \qquad
(\dots, \pos{p-1}{1}, \pos{p}{1}, \pos{p+1}{k}, 2, \dots),
\]
and in both cases we arrive at a contradiction.

Suppose then that $p = 1$ and $q \geq 4$. Then
\[
(1, 1, 2, 3, \dots, k-1, k) \sigma = (\pos{1}{1}, 2, 3, \dots, q-1, \pos{q}{1}, q, \dots, k),
\]
and, depending on whether $\sigma(1) = 1$ and $\sigma(q) = 2$, or $\sigma(1) = 2$ and $\sigma(q) = 1$, it holds that $\vect{d}_{\{1,3\}} \sigma = (1, 2, 1, 3, \dots, k-1, k) \sigma$ equals
\[
(\pos{1}{1}, 1, 3, \dots, q-1, \pos{q}{2}, q, \dots, k)
\qquad \text{or} \qquad
(\pos{1}{2}, 1, 3, \dots, q-1, \pos{q}{1}, q, \dots, k),
\]
and in both cases we arrive at a contradiction.

Suppose then that $p = 1$ and $q = 3$. Then
\[
(1, 1, 2, 3, \dots, k-1, k) \sigma = (1, 2, 1, 3, 4, \dots, k-1, k),
\]
and, depending on whether $\sigma(1) = 1$ and $\sigma(3) = 2$, or $\sigma(1) = 2$ and $\sigma(3) = 1$, it holds that
$\vect{d}_{\{2,n\}} \sigma = (k, 1, 2, 3, \dots, k-1, 1) \sigma$ equals
\[
(k, 2, 1, 3, 4, \dots, k-1, 1)
\qquad \text{or} \qquad
(1, 2, k, 3, 4, \dots, k-1, 1),
\]
and in both cases we arrive at a contradiction.

Finally, suppose that $p = 1$ and $q = 2$. Then
\[
(1, 1, 2, 3, \dots, k-1, k) \sigma = (1, 1, 2, 3, \dots, k-1, k),
\]
and we have that either $\sigma = (1 \; 2)$ or $\sigma$ is the identity permutation. If $\sigma = (1 \; 2)$, then
\[
\vect{d}_{\{1,3\}} \sigma = (1, 2, 1, 3, \dots, n-1) \sigma = (2, 1, 1, 3, \dots, n-1),
\]
a contradiction.
The only remaining possibility is that $\sigma$ is the identity permutation, and we have arrived at our desired result.
\end{proof}


\section{Functions of small arities}

Let us still consider functions $f \colon A^n \to B$ with $2 \leq n \leq \card{A}$.
It is immediate from the definition that every binary function has a unique identification minor.
Also the $2$\hyp{}set\hyp{}transitive functions and the functions determined by the order of first occurrence have a unique identification minor, regardless of the arity.

In order to explain more easily what follows, let us extend some of the previous notions to partial functions.
An $n$-ary \emph{partial function} from $A$ to $B$ is a map $f \colon S \to B$, where $S \subseteq A^n$.
In the case when $S = A^n$ we speak of \emph{total functions.}
We are mainly interested in partial functions whose domain is $A^n_{=} := A^n \setminus A^n_{\neq}$, i.e., the set of $n$\hyp{}tuples on $A$ with some repeated entries.

In analogy to total functions, a partial function $f \colon S \to B$ with $S \subseteq A^n$ is \emph{invariant} under a permutation $\sigma \in \symm{n}$ if $\ontuples{\sigma}$ maps the domain set $S$ onto itself and $f = f \circ \ontuples{\sigma}|_S$.
Then the notions of \emph{invariance group} and \emph{$2$\hyp{}set\hyp{}transitivity} are defined for partial functions in the same way as for total functions.
Similarly, $f \colon S \to B$ is \emph{determined by the order of first occurrence} if $f = f^* \circ {\ofo}|_S$ for some $f^* \colon A^\sharp \to B$.
We also say that two $n$-ary partial functions $f \colon S \to B$ and $g \colon T \to B$ ($S, T \subseteq A^n$) are \emph{equivalent} if there exists a permutation $\sigma \in \symm{n}$ such that $\ontuples{\sigma}$ maps the set $S$ onto $T$ and $f = g \circ \ontuples{\sigma}|_S$.

Let $f, g \colon A^n \to B$. If $f|_{A^n_{=}} = g|_{A^n_{=}}$, then $f_I = g_I$ for every $I \in \couples$, because the values of $f$ and $g$ in $A^n_{\neq}$ do not play any role in the formation of identification minors.
Consequently, if $n \leq \card{A}$ and $f \colon A^n \to B$ has a unique identification minor (being, for example, $2$\hyp{}set\hyp{}transitive or determined by the order of first occurrence), then by changing the values of $f$ in $A^n_{\neq}$, we can construct other functions that have a unique identification minor but that are not necessarily $2$\hyp{}set\hyp{}transitive or determined by the order of first occurrence.

Thus, if $f \colon A^n \to B$ is a function such that $f|_{A^n_{=}}$ is $2$-set-transitive or determined by the order of first occurrence, then $f$ has a unique identification minor.
It should also be noted that the functional construction presented in Definition~\ref{def:fGPphi} and applied in Proposition~\ref{prop:notofonor2settransitive} can be modified, with obvious changes, to provide examples of functions $f \colon A^n \to B$ ($n \leq \card{A}$) that have a unique identification minor but for which $f|_{A^n_{=}}$ is not, up to equivalence, determined by the order of first occurrence and, in the case that $n \geq 3$, $f|_{A^n_{=}}$ is not $2$\hyp{}set\hyp{}transitive.

\begin{definition}
\label{def:fGPphi-modified}
Let $A$ be a set with $\card{A} = k$, and let $m \in \IN_+$ be such that $2 \leq m \leq k$, and let $n := m + 1$.
Let $g' \colon \mathcal{P}(A) \to B$ and let $g \colon A^m \to B$, $g = g' \circ {\supp}|_{A^m}$.
Let $G := (g^I)_{I \in \couples}$ be a family of functions $g^I \colon A^m \to B$ satisfying $g^I(\vect{a}) = g(\vect{a})$ whenever $\card{\supp{\vect{a}}} < m$, and let $P := (\rho_I)_{I \in \couples}$ be a family of permutations in $\symm{m}$.
Let $\phi \colon \couples \to \couples$ be a bijection.
Let $f_{G,P,\phi} \colon A^n_{=} \to B$ be the partial operation defined by the rule $f_{G,P,\phi}(\vect{b}) = g^{\phi(I)}(\vect{a} \rho_I)$ if $\vect{b} = \vect{a} \delta_I$ for $\vect{a} \in A^m$ and $I \in \couples$.
\end{definition}

The definition of $f_{G,P,\phi}$ is good, because if $\card{\supp(\vect{b})} = m$, then there is a unique $\vect{a} \in A^m$ and a unique $I \in \couples$ such that $\vect{b} = \vect{a} \delta_I$; and if $\card{\supp{\vect{b}}} < m$, then for every $\vect{a} \in A^m$ and for every $I \in \couples$ satisfying $\vect{b} = \vect{a} \delta_I$ we have $\supp(\vect{a}) = \supp(\vect{b})$, so $\card{\supp(\vect{a})} < m$, and we have $g^{\phi(I)}(\vect{a} \rho_I) = g(\vect{a} \rho_I) = g' \circ {\supp}(\vect{a} \rho_I) = g'(\supp(\vect{b}))$.

It is easy to see that if $f \colon A^n \to B$ is any function such that $f|_{A^n_{=}} = f_{G,P,\phi}$, then $f_I \equiv g^{\phi(I)}$ for every $I \in \couples$. Furthermore, Proposition~\ref{prop:notofonor2settransitive} extends to small arities as follows.

\begin{proposition}
\label{prop:notofonor2settransitive-modified}
Let $A$ and $B$ be sets and let $m$ be an integer such that $2 \leq m \leq \card{A}$ and $\card{B} \geq 2$. Let $n := m + 1$.
Then there exist functions $f \colon A^n \to B$ and $f^* \colon A^\sharp \to B$ such that $f_I \equiv f^* \circ {\ofo}|_{A^{n-1}}$ for all $I \in \couples$ but $f|_{A^n_{=}}$ is not equivalent to any $n$\hyp{}ary partial function determined by the order of first occurrence and, in the case that $m \geq 3$, $f|_{A^n_{=}}$ is not $2$\hyp{}set\hyp{}transitive.
\end{proposition}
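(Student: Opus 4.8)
The plan is to imitate the proof of Proposition~\ref{prop:notofonor2settransitive}, with $m$ playing the role that $k$ played there and the partial operation of Definition~\ref{def:fGPphi-modified} in place of the total function of Definition~\ref{def:fGPphi}. Fix a labelling so that $\{1, \dots, m\} \subseteq A$ (possible since $m \leq \card{A}$) and pick distinct $\alpha, \beta \in B$. I would take $g' \colon \mathcal{P}(A) \to B$ constantly equal to $\beta$, $g := g' \circ {\supp}|_{A^m}$, and $g^I := h$ for every $I \in \couples$, where $h \colon A^m \to B$ sends $(1, 2, \dots, m)$ to $\alpha$ and everything else to $\beta$; further $\rho_I := (1 \; 2 \; \cdots \; m)^{\min I} \in \symm{m}$ and $\phi := \id$. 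Since $(1, \dots, m)$ has support of size $m$, the family $(g^I)_I$ meets the requirement of Definition~\ref{def:fGPphi-modified}, so the partial operation $f_0 := f_{G,P,\phi} \colon A^n_{=} \to B$ is well defined; let $f \colon A^n \to B$ be any total extension of $f_0$, say the one that is constantly $\beta$ on $A^n_{\neq}$.

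For the first assertion, the remark preceding the statement gives $f_I \equiv g^{\phi(I)} = h$ for all $I$, so it suffices to check that $h$ is itself determined by the order of first occurrence. This is immediate: if $\ofo(\vect{a}) = \ofo(\vect{b})$ and $\vect{a} = (1,\dots,m)$, then $\ofo(\vect{b})$ has length $m$, so $\vect{b}$ has no repeated entry and $\vect{b} = \ofo(\vect{b}) = (1, \dots, m)$; hence $h = f^* \circ {\ofo}|_{A^m}$, where $f^* \colon A^\sharp \to B$ sends $(1, \dots, m)$ to $\alpha$ and everything else to $\beta$. Therefore $f_I \equiv h = f^* \circ {\ofo}|_{A^{n-1}}$ for every $I \in \couples$, with one and the same $f^*$, as required.

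For the second assertion I would carry over the notation $\vect{d}_I := (1,\dots,m)\rho_I\delta_I$ and the facts proved in Proposition~\ref{prop:notofonor2settransitive}: each $\vect{d}_I$ lies in $A^n_{=}$, its only repeated element is $1$ (occurring exactly at the two positions of $I$), $\supp(\vect{d}_I) = \{1, \dots, m\}$, and $f(\vect{b}) = \alpha$ if and only if $\vect{b} = \vect{d}_I$ for some $I$. Suppose $f|_{A^n_{=}}$ were equivalent to an ofo partial function $f' \circ {\ofo}|_T$. Since $\ontuples{\sigma}$ maps $A^n_{=}$ onto itself for every $\sigma \in \symm{n}$, necessarily $T = A^n_{=}$, and $f|_{A^n_{=}}(\vect{b}) = f'(\ofo(\vect{b}\sigma))$ for all $\vect{b} \in A^n_{=}$ and some fixed $\sigma$. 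Fix $I$, set $(u_1, \dots, u_{n-1}) := \ofo(\vect{d}_I\sigma)$, and note that $2$ occurs among the $u_j$ because $2 \in \supp(\vect{d}_I)$ (this is where $m \geq 2$ is used). Then $\vect{c} := (u_1, \dots, u_{n-1}, 2)\sigma^{-1}$ lies in $A^n_{=}$, has two occurrences of $2$, and satisfies $\ofo(\vect{c}\sigma) = \ofo(\vect{d}_I\sigma)$; since $2$ is the repeated element of no $\vect{d}_J$, we get $f(\vect{c}) = \beta$, whence $\beta = f'(\ofo(\vect{c}\sigma)) = f'(\ofo(\vect{d}_I\sigma)) = f(\vect{d}_I) = \alpha$, a contradiction.

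For the third assertion, assume $m \geq 3$. A permutation $\sigma$ lies in $\Inv(f|_{A^n_{=}})$ precisely when $\ontuples{\sigma}$ permutes the set $\{\vect{d}_I : I \in \couples\}$ (the $\alpha$\hyp{}fibre of $f$) among itself, since $f$ is constant on the remainder of $A^n_{=}$. Up to renaming $k$ as $m$, these are exactly the tuples treated in the multi\hyp{}case computation inside the proof of Proposition~\ref{prop:notofonor2settransitive}, so that computation applies verbatim and yields $\Inv(f|_{A^n_{=}}) = \{\id\}$; as $\couples$ has more than one element, $f|_{A^n_{=}}$ is not $2$\hyp{}set\hyp{}transitive. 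The only point that genuinely differs from Proposition~\ref{prop:notofonor2settransitive} is the passage to partial functions --- verifying that equivalence forces the domain to remain $A^n_{=}$ and that the invariance group is computed from the restriction to $A^n_{=}$ alone --- and that is routine; I expect the bulk of the labour, and hence the main (though purely mechanical) obstacle, to be re\hyp{}running the case analysis of Proposition~\ref{prop:notofonor2settransitive} with $m$ in place of $k$.
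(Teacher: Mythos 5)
Your proposal is correct and is precisely the ``straightforward modification'' of the proof of Proposition~\ref{prop:notofonor2settransitive} that the paper invokes: the same construction with $m$ in place of $k$, the same tuples $\vect{d}_I$, the same non-equivalence argument via a tuple with two occurrences of $2$, and the same case analysis for triviality of the invariance group. The extra points you flag (that equivalence of partial functions forces the domain to stay $A^n_{=}$, and that $h$ itself is determined by $\ofo$) are exactly the details one must check, and you check them correctly.
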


\begin{proof}
Straightforward modification of the proof of Proposition~\ref{prop:notofonor2settransitive}.
\end{proof}


\section{Concluding remarks}

We have investigated the problem of determining the functions $f \colon A^n \to B$ that have a unique identification minor (see Problem~\ref{prob:uniqueidm}).
While a definitive answer to this problem eludes us, let us summarize here some facts we know.
It is well known from earlier results that the $2$-set-transitive functions have this property, and we showed in this paper (Proposition~\ref{prop:ofominor}) that the functions that are, up to equivalence, determined by the order of first occurrence also have a unique identification minor.
More generally, the functions $f \colon A^n \to B$ such that the restriction $f|_{A^n_{=}}$ is $2$\hyp{}set\hyp{}transitive or equivalent to an $n$\hyp{}ary partial function determined by the order of first occurrence have a unique identification minor. (This generalization is proper only in the case when $n \leq \card{A}$.)
Furthermore, Proposition~\ref{prop:notofonor2settransitive} shows that if $n = \card{A} + 1$, then there exist functions $f \colon A^n \to B$ such that $f$ has a unique identification minor and $f$ is neither $2$\hyp{}set\hyp{}transitive nor equivalent to any function determined by the order of first occurrence.
Analogously, by Proposition~\ref{prop:notofonor2settransitive-modified}, if $n \leq \card{A}$, then there exist functions $f \colon A^n \to B$ such that $f$ has a unique identification minor and $f|_{A^n_{=}}$ is not equivalent to any partial function determined by the order of first occurrence and, in the case that $n \geq 4$, $f|_{A^n_{=}}$ is not $2$\hyp{}set\hyp{}transitive.
Note also that every binary function trivially has a unique identification minor.

A complete and explicit characterization of functions with a unique identification minor remains a topic of further investigation.
In particular, whether there exist functions $f \colon A^n \to B$ with $n > \card{A} + 1$ with a unique identification minor, other than the ones that are, up to equivalence, $2$\hyp{}set\hyp{}transitive or determined by the order of first occurrence, remains an open problem.
The author conjectures that no other such functions exist when $n > \card{A} + 1$.


\section*{Acknowledgments}

The author would like to thank Miguel Couceiro, Maria João Gouveia, Peter Mayr, Karsten Schölzel, and Tamás Waldhauser for inspiring discussions on minors of functions and on the order of first occurrence.

This work was developed within the FCT Project PEst-OE/MAT/UI0143/2014 of CAUL, FCUL.

\end{document}